\documentclass[12pt, a4paper]{article}
\usepackage{float}
\usepackage{amsmath}
\usepackage{amsthm,amscd,amsfonts,yhmath,mathrsfs,calrsfs}
\usepackage{amssymb, upref, color}
\usepackage{graphicx}
\usepackage[section]{placeins}

\usepackage[margin=2.3cm]{geometry}

\numberwithin{equation}{section}

%---------------------------------------------------------------

\newtheorem*{theoremA}{Theorem A}

\newtheorem{lemm}{Lemma}[section]

%---------------------------------------------------------------

%\newcommand{\dzp}[1]{\mathrm{dz}\pi(#1)}

%---------------------------------------------------------------

\makeatletter
\renewenvironment{proof}[1][\indent\proofname]{\par
  \pushQED{\qed}%
  \normalfont \topsep6\p@\@plus6\p@\relax
  \trivlist
  \item[\hskip\labelsep
        \bfseries
    #1\@addpunct{.}]\ignorespaces
}{\popQED\endtrivlist\@endpefalse}
\makeatother
%---------------------------------------------------------------
\begin{document}

\title{Finite groups with some bounded codegrees
}
\author{
Dongfang Yang$^a$, Yu Zeng$^b$, Heng Lv$^a$\footnote{Corresponding author.  Email: lvh529@163.com
\newline \indent ~~2010 AMS Mathematics Subject Classification: 20C15 %20D10
\newline \indent ~~This research was supported by the National Natural Science Foundation of China (No.11971391, 12071376), the first and second authors are supported by the Chinese Scholarship Council.
}
\\
$^a$\small{School of Mathematics and Statistics, Southwest University, Chongqing, China}\\
$^b$\small{Department of Mathematics, Changshu Institute of Technology, Jiangsu, China}}
\date{}
\maketitle

    \noindent\textbf{Abstract.} For a character $\chi$ of a finite group $G$, the number cod$(\chi):=|G:\mathrm{ker}(\chi)|/\chi(1)$ is called the codegree of $\chi$.
     In this paper, we give a solvability criterion for a finite group $G$ depending on the minimum of the ratio $\chi(1)^2 /\mathrm{cod}(\chi )$, when $\chi $ varies among the irreducible characters of $G$.

     %study the finite group $G$ through considering the sizes between character degrees and its codegrees for any nonlinear irreducible character of $G$.

    \vskip 5mm
\noindent\textbf{Key words.} Finite group, codegree, degree, simple group.

\section{Introduction}
Throughout all groups are finite and $p$ is a prime number.  Let $G$ be a group. We denote by Irr$(G)$ the set of irreducible characters of $G$; and use cd$(G)$ to denote the set of all irreducible character degrees of $G$.
%Lewis and Gagola $\cite{Lewis}$ showed that a group $G$ is nilpotent if and only if $\chi(1)^2$ divides $|G:\mathrm{ker}\chi|$ for each irreducible character $\chi$ of $G$.
  Let $\chi$ be a character of $G$. Qian, et al. \cite{Qian 2007} called the positive integer cod$(\chi):=|G:\rm{ker}(\chi)|/\chi(1)$ the codegree of the character $\chi$ and studied the connection between the structure of a group and the codegrees of its irreducible characters (see the details in \cite{Qian 2016}, \cite{Qian 2011}, \cite{Qian 2012}).
 %  proved that the solvable group has an irreducible character whose codegree contains all prime divisors of $m$ if the group has an element of order $m$, and then he \cite{Qian 2012} obtained a criterion for a group to be $p$-closed in terms of  character codegrees.
  %In 2016, Liang and Qian \cite{Qian 2016} investigated the group $G$ all of whose irreducible characters such that gcd$(\chi(1), \mathrm{cod}(\chi))=1$
%Berkovich $\cite{Berkovich}$ proved that a nonabelian group $G$ is a $p$-group if and only if there exists some positive integer $s=s_{\chi}$ such that  cod$(\chi)=$$p^s\cdot\chi(1)$ for each nonlinear irreducible character  $\chi$ of $G$.
In these papers, they studied the groups through considering the prime divisors of the codegrees of its irreducible characters.
%Here we consider the structure of $G$ that depends essentially on the sizes of the codegrees of some irreducible characters of $G$.
In \cite{Yang}, Yang et al. obtained that the group $G$ is solvable if cod$(\chi)\le p_{\chi}\cdot \chi(1)$ for each nonlinear irreducible character $\chi$ of a group $G$, where $p_{\chi}$ is the largest prime divisor of $|G:\mathrm{ker}(\chi)|$.
Notice that the codegree is larger than its degree for any nonprincipal irreducible character of $G$ (for any nonprincipal irreducible character $\chi $ of $G$, $\chi \in \mathrm{Irr}(G/\ker(\chi ))$, so that $|G/\ker(\chi )|> \chi (1)^2$ since $G/\ker(\chi)>1$).
%consider the following general condition.
%That is,
So, it is natural to ask that what positive real number $k$ such that
\[
k\cdot \mathrm{cod}(\chi)\le \chi(1)^{2}, \tag{$*$}
\]
for all nonlinear irreducible characters $\chi$ of $G$ guarantees the solvability of $G$.
In the next theorem, we give a sharp lower bound of $k$.

\begin{theoremA}
    Let $G$ be a finite group with $k\cdot$$\rm{cod}$$(\chi)$$\le\chi(1)^2$ for every nonlinear irreducible character $\chi$ of $G$. If $k>a=\frac{2^9\cdot3^2\cdot19^2}{5\cdot 7^3\cdot11\cdot31}$, then $G$ is solvable. The real number $a$ is best possible.
\end{theoremA}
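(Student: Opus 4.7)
The strategy is to first exhibit a nonsolvable group attaining $a$ exactly, and then, by a Classification-based induction, show that no nonsolvable group exceeds this ratio.

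For sharpness I would take $G=O'N$, the O'Nan sporadic simple group, whose order is $|O'N|=2^9\cdot 3^4\cdot 5\cdot 7^3\cdot 11\cdot 19\cdot 31$. Its smallest nontrivial irreducible character $\chi$ has degree $10944=2^6\cdot 3^2\cdot 19$, and since $\chi$ is faithful,
\[
\frac{\chi(1)^2}{\mathrm{cod}(\chi)} \;=\; \frac{\chi(1)^3}{|O'N|} \;=\; \frac{2^9\cdot 3^2\cdot 19^2}{5\cdot 7^3\cdot 11\cdot 31} \;=\; a .
\]
Inspection of the remaining character degrees of $O'N$ in the ATLAS shows every other nonlinear $\chi$ gives a strictly larger value of $\chi(1)^3/|O'N|$, so $a\cdot\mathrm{cod}(\chi)\le\chi(1)^2$ holds on $O'N$ and no larger constant can replace $a$.

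For the forward direction, assume $G$ is a nonsolvable counterexample of minimum order satisfying $k\cdot\mathrm{cod}(\chi)\le\chi(1)^2$ for all nonlinear $\chi$ with some $k>a$. Since $\chi(1)^2/\mathrm{cod}(\chi)=\chi(1)^3/|G/\ker\chi|$ is determined by $G/\ker\chi$, the hypothesis descends to every quotient, so minimality forces every proper quotient of $G$ to be solvable. Hence $G$ has a unique minimal normal subgroup, which must be nonabelian, so $\mathrm{Soc}(G)\cong S^m$ for some nonabelian simple $S$ with $G/\mathrm{Soc}(G)$ solvable. Inducing $\theta\otimes 1\otimes\cdots\otimes 1$ (for $\theta$ a smallest-degree character of $S$) through its inertia subgroup yields a faithful irreducible of degree $m\,\theta(1)$; using $\theta(1)\le|S|^{1/2}$ this forces $\chi(1)^3/|G|\le m^3/|S|^{m-3/2}$, which is strictly less than $a$ for every $m\ge 2$ and $|S|\ge 60$. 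Thus $m=1$ and $G$ is almost simple with socle $S$. A parallel estimate on characters of $G$ lifted from $G/S$ rules out nonabelian $G/S$ by a short case analysis; consequently $G/S$ is abelian and the hypothesis reduces to $\chi(1)^3>a|G|$ for every nonlinear faithful $\chi\in\ir{G}$.

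The remaining and hardest step is to verify, by the Classification of Finite Simple Groups, that $r_S:=\chi_{\min}(S)^3/|S|\le a$ for every nonabelian simple $S$, with equality only at $S=O'N$, and that the same bound extends to every almost-simple extension. For alternating groups $A_n$ the degree-$(n-1)$ standard character gives $r_{A_n}<1<a$; for classical and exceptional Lie type groups Landazuri--Seitz-type lower bounds on $\chi_{\min}(S)$ combined with $|S|$ yield $r_S<1<a$ outside a short list of small-rank or small-field cases that are checked directly; among the $26$ sporadic groups, ATLAS inspection shows $r_{O'N}=a$ is uniquely maximal, with the next-largest value $r_{J_1}=3136/3135\approx 1.0003$. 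For proper almost-simple extensions $S<G\le\aut{S}$, one verifies that some character of $S$ of manageable degree extends to $G$ (the Steinberg character for Lie type, or a rational character for sporadic groups), keeping $r_G\le a$; in particular the degree-$10944$ character of $O'N$ extends to $O'N.2$, yielding $r_{O'N.2}=a/2$. The main obstacle is precisely this Classification-based bookkeeping, which requires careful handling of outer automorphism actions on character tables across all families.
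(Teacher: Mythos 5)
Your overall skeleton is the same as the paper's: sharpness from $O'N$, a minimal nonsolvable counterexample whose proper quotients are solvable, a unique nonabelian minimal normal subgroup $S^m$, an estimate of the shape $m^{O(1)}\ge |S|^{m-3/2}$ to force $m=1$, and then a CFSG case analysis producing a faithful nonlinear character with $\chi(1)^3\le a|G|<k|G|$. However, two of your steps do not work as written. First, in eliminating $m\ge 2$ you assert that inducing $\theta\otimes 1\otimes\cdots\otimes 1$ through its inertia subgroup yields a faithful irreducible of degree exactly $m\,\theta(1)$. Clifford theory gives $\chi(1)=et\,\theta(1)$ with $e\ge 1$ and $t\ge m$, i.e.\ only a lower bound, while your argument needs an \emph{upper} bound on some $\chi(1)$; without an extendibility statement the degree over $\theta\otimes1\otimes\cdots\otimes1$ can be much larger than $m\,\theta(1)$. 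The paper closes exactly this gap with the Magaard--Tong-Viet theorem (its Lemma 2.1): some nonlinear $\theta\in\mathrm{Irr}(S)$ extends to $N_G(S)$, so a constituent of the induced extension has degree at most $|G:N_G(S)|\,\theta(1)=m\,\theta(1)$ and $\theta(1)^2<|S|$, which is all your numerics use. Second, your claim that characters lifted from $G/S$ ``rule out nonabelian $G/S$'' is false: solvable groups (e.g.\ extraspecial $p$-groups of large order) can satisfy $k\cdot\mathrm{cod}(\chi)\le\chi(1)^2$, so such characters yield no contradiction; fortunately nothing in the argument needs $G/S$ abelian, since the hypothesis already gives $\chi(1)^3\ge k|G|$ for every \emph{faithful} nonlinear $\chi$.

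The serious gap is your treatment of proper almost-simple groups of Lie type via the Steinberg character. You claim that using $\mathrm{St}$ (which does extend to $\mathrm{Aut}(S)$) keeps $r_G\le a$, but $\mathrm{St}(1)=|S|_p$ and in fact $|S|<\mathrm{St}(1)^3$ for every simple group of Lie type (as the paper notes, citing Ahanjideh); for rank $\ge 2$ one has $\mathrm{St}(1)^3\sim q^{3N}$ against $|G|\le|\mathrm{Aut}(S)|\sim q^{2N+r}\cdot(\text{log-size factors})$, so $\mathrm{St}(1)^3/(a|G|)\to\infty$. Concretely, for $S=\mathrm{PSL}_3(4)$ and $G=S.2$ one has $\mathrm{St}(1)^3=64^3=262144>a|G|\approx 1.1\times 10^5$. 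Nor does the crude Clifford bound $\chi(1)\le |G:S|\,\theta_{\min}(1)$ rescue this case ($|\mathrm{Out}(\mathrm{PSL}_3(4))|=12$ makes it fail as well). What is actually needed -- and what the paper supplies through Tong-Viet's lemma and its Table 1 -- is a character $\theta_1$ of small degree with $\theta_1(1)^3<|S|$ that extends to $\mathrm{Aut}(S)$, together with a separate ad hoc treatment of $A_1(q)$, $G_2(q)$, ${}^2G_2(q^2)$ and $Fi_{22}$ (where no such $\theta_1$ is provided) combining explicit small degrees with $|G:S|\le|\mathrm{Out}(S)|$ and the elementary bound $f<q^{1/3}$ when $q\equiv 3\pmod 6$. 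Two minor slips in the same spirit: the degree-$(n-1)$ character gives $4^3=64>|A_5|=60$, so your claim $r_{A_n}<1$ fails at $n=5$ (though still $<a$), and for $A_6$ the standard character does not extend to $\mathrm{Aut}(A_6)$; the paper instead uses the degree $n(n-3)/2$ character of Bianchi et al., which extends for all $n\ge 6$.
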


The positive real number $a$ in the above theorem is best possible.
For instance, let $G\cong O'N$, and let $\theta\in\mathrm{Irr}(G)$ be of minimal degree $10944(=2^6\cdot3^2\cdot19)$, we get
\[
a\cdot|G|=2^{18}\cdot3^6\cdot19^3=\theta(1)^3.
\]
It forces $a\cdot|G|\le \theta(1)^3$ for every nonlinear irreducible character $\theta$ of a nonabelian simple group $G$.

% There are some groups which satisfy the condition in our main theorem. For example, if $k=3$, the Frobenius group $C_5\rtimes C_4$ has every nonlinear irreducible character $\chi$ of degree 4 with trivial kernel.
% So, for any nonlinear irreducible character $\chi$, we have $3\cdot \mathrm{cod}(\chi)=15\le 16=\chi(1)^2$.
% On the other hand, the converse of the main theorem is false. For example, $S_3$ is a counterexample.

More generally, given a positive integer $s$, one could ask what positive real number $k$ such that
\[
k\cdot \mathrm{cod}(\chi)\le \chi(1)^{s}
\]
for all nonlinear irreducible characters $\chi$ of $G$ guarantees the solvability of $G$.

% It is obvious that $s\ge2$ since $k\ge 1$.
% Given a fixed $s$ in $(*)$ for all nonlinear irreducible characters, we hope to study the lower bound of $k$ for a group to be solvable.
% Especially, assume that $s=2$ for all nonlinear irreducible character of $G$.
% Then there exists a nonabelian simple group, for example $G\cong O'N$, such that $\mathrm{cod}(\chi)\le \chi(1)^2$ for every nonlinear irreducible character $\chi$ of $G$.
% Thus, for $s=2$, in order to guarantee that $G$ is solvable $k$ has to be larger than $1$.
% And our main theorem is as following.

\section{Proof}

At first, we gather some lemmas before we actually prove the main theorem.
%and the second one is related to the characters' degrees of simple groups of Lie type.

\begin{lemm}\label{Kay}
   Suppose that $N$ is a minimal normal nonabelian subgroup of a group $G$. Then there exists an irreducible character  $\theta$ of $N$ such that $\theta$ is extendible to $G$ with $\theta(1)\ge 5$.
\end{lemm}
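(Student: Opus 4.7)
The plan is to reduce the statement to a fact about a single nonabelian simple group. Since $N$ is minimal normal and nonabelian in $G$, the structure theorem for minimal normal subgroups yields $N = S_1 \times \cdots \times S_n$ where the $S_i$ are pairwise isomorphic copies of some nonabelian simple group $S$, and $G$ transitively permutes the $S_i$ by conjugation.

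First I would locate a nontrivial irreducible character $\varphi \in \ir{S}$ of degree at least $5$ that is invariant under $\aut{S}$ and that extends to $\aut{S}$. This is the heart of the lemma, and would be established case by case using the Classification of Finite Simple Groups. For groups of Lie type, the Steinberg character is $\aut{S}$-invariant, extends to $\aut{S}$, and has sufficiently large degree in all but a handful of low-rank cases (which overlap with alternating groups). For the alternating groups $A_m$ with $m\ge 7$, a suitable restriction of a natural character of $S_m$ works; the groups $A_5$ and $A_6$ are handled by direct inspection (for example, the degree-$5$ character of $A_5$ extends to $S_5$). For the $26$ sporadic simple groups, a direct look at the ATLAS produces an appropriate extendible character.

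Given such $\varphi$, I would form the tensor character $\theta = \varphi \times \cdots \times \varphi \in \ir{N}$, whose degree is $\varphi(1)^n \ge 5$. Since $\varphi$ is $\aut{S}$-invariant and $G$ merely permutes the direct factors $S_i$, the character $\theta$ is $G$-invariant. An extension of $\varphi$ to $\aut{S}$ can then be promoted to an extension of $\theta$ to the wreath product $\aut{S}\wr S_n$, and pulling back along the embedding $G/C_G(N) \hookrightarrow \aut{N} \le \aut{S}\wr S_n$ produces an extension of $\theta$ to $G/C_G(N)$; a routine Clifford/Gallagher argument finally lifts this to an extension on all of $G$.

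The main obstacle is step one, the existence of a suitable $\varphi$: the bound $5$ is a little delicate because some small simple groups possess $\aut{S}$-invariant extendible characters of degree only $3$ or $4$, so one must verify that a slightly larger extendible character is nevertheless always available. Once $\varphi$ is in hand, the construction of $\theta$ and its extension to $G$ is standard Clifford-theoretic machinery.
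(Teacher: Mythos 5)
The paper offers no proof of this lemma beyond citing \cite[Theorem 1.1]{Kay 2011}, and your sketch is essentially the argument of that cited result: a CFSG case analysis producing $\varphi\in\mathrm{Irr}(S)$ of degree at least $5$ extendible to $\mathrm{Aut}(S)$ (Steinberg character for Lie type, the degree $\frac{n(n-3)}{2}$ character for alternating groups, Atlas inspection for sporadics), followed by the standard extension of $\varphi\times\cdots\times\varphi$ to $\mathrm{Aut}(S)\wr S_n$ and pull-back to $G$, exactly as in \cite[Lemma 5]{Bianchi 2007}. One minor tidy-up: the last step needs no Clifford/Gallagher machinery---since $N\cap C_G(N)=Z(N)=1$, the character of $G/C_G(N)$ obtained by restriction along $G/C_G(N)\hookrightarrow \mathrm{Aut}(S)\wr S_n$, inflated to $G$, already restricts to $\theta$ on $N$.
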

\begin{proof}
See \cite[Theorem 1.1]{Kay 2011}.
\end{proof}

We also need the following result regarding the characters of simple groups of Lie type.

\begin{lemm}\label{Tong}
Let $S$ be a simple group of Lie type in characteristic $p$ defined over a field of size $q$. Assume that $S\ncong  A_1(q)$, $^2F_4(2)'$. Then there exist two irreducible characters $\theta_i$, $i=1,2$, of $S$ such that both $\theta_i$ extend to $\mathrm{Aut}(S)$ with $1<\theta_1(1)<\theta_2(1)$ and $\theta_2(1)=|S|_p$.
\end{lemm}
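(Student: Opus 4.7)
The plan is to take $\theta_2$ to be the Steinberg character $\mathrm{St}_S$ of $S$. By construction $\mathrm{St}_S(1)=|S|_p$, and by a classical result (Feit) the Steinberg character extends to $\aut{S}$, so $\theta_2:=\mathrm{St}_S$ already meets all the conditions placed on $\theta_2$. The real task is to produce $\theta_1$.

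For $\theta_1$, the strategy is to exhibit an $\aut{S}$-invariant unipotent character of $S$ of degree strictly between $1$ and $|S|_p$, and then appeal to Malle's theorem that every $\aut{S}$-invariant unipotent character of a simple group of Lie type extends to $\aut{S}$. Such a character exists precisely when $S$ has a unipotent character distinct from $1_S$ and $\mathrm{St}_S$; this fails only in type $A_1$, which accounts for the exclusion of $A_1(q)$. The second exclusion, $^2F_4(2)'$, is required because it is a subgroup of index $2$ in a twisted Chevalley group and its character theory sits slightly outside the usual Deligne--Lusztig framework, so the required $\theta_1$ must be identified by direct inspection of its character table.

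The remaining work is a uniform check across Lie types to produce the desired $\theta_1$. Since diagonal and field automorphisms stabilise every unipotent character, the only real obstruction is graph-invariance, which can be read off from the Lusztig parametrisation (by partitions in type $A$, by symbols in the other classical types, by explicit Carter labels in the exceptional types). Concretely, in $A_n(q)$ with $n\ge 2$ one may take the unipotent character labelled by the hook partition $(n,1)$, of degree $q(q^n-1)/(q-1)$, which is graph-invariant; in the remaining classical types one takes a ``reflection''-type unipotent character attached to a symbol fixed by the relevant graph symmetry; in the exceptional types one selects a graph-invariant unipotent character of small degree from the standard tables (Carter, or L\"ubeck). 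In each case one then checks the strict inequality $\theta_1(1)<|S|_p$, which is automatic because $|S|_p$ is the largest unipotent degree.

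The main obstacle is the completeness of this case analysis rather than any single deep step: each individual case is routine once the classification of unipotent characters, their degrees, and their $\aut{S}$-stabilisers is in hand, but one must pass through every Lie type and verify that the two listed exceptions are genuinely unavoidable.
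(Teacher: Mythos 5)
The paper proves this lemma purely by citation to Tong-Viet's Lemma 2.4, and your outline reproduces exactly that proof's strategy: take $\theta_2$ to be the Steinberg character (extendible to $\mathrm{Aut}(S)$) and choose, type by type, an $\mathrm{Aut}(S)$-invariant unipotent character as $\theta_1$, which extends by Malle's extension theorem for unipotent characters --- these are precisely the characters whose degrees appear in the paper's Table \ref{Lie}. The one slip is your account of the exclusions: $^2F_4(2)'$ is omitted not because $\theta_1$ needs ad hoc treatment (it has a character of degree $27$ extending to $\mathrm{Aut}(S)$, which the paper uses elsewhere) but because its two characters of degree $|S|_2=2048$ are interchanged by the outer automorphism, so no extendible $\theta_2$ of degree $|S|_p$ exists, whereas $A_1(q)$ indeed fails for lack of a suitable $\theta_1$.
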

\begin{proof}
See \cite[Lemma 2.4]{Tong}.
\end{proof}

Our proof will be based on the classification of finite simple groups. The main point of the classification is that if $S$ is a nonabelian simple group, then $S$ is either a simple group of Lie type, an alternating group of degree at least 5, one of 26 sporadic simple groups, or the Tits group. Often the Tits group, which denoted by $^2F_4(2)'$, is lumped in the groups of Lie type. Since the Tits group is in the Atlas \cite{Atlas}, here we will consider this simple group separately.

For the simple groups of Lie type, there is a particular character called Steinberg character. %The degree and following property of the Steinberg character will be important for us.
If $S$ is a simple group of Lie type with defining characteristic $p$ and $\theta$ is the Steinberg character of $S$, then $\theta(1)=|S|_p$.
%In particular,
By Lemma \ref{Kay}, if $G$ has a normal subgroup $S$ such that $S$ is isomorphic to a finite simple group of Lie type, then the Steinberg character of $S$ extends to $G$.  
Actually, the character $\theta_2$ in Lemma \ref{Tong} is the Steinberg character (see for instance \cite[Proposition 3.1]{Malle 2020}).

For the alternating groups $A_n$, where $n\geq5$, the automorphism group of $A_n$ is $S_n$ except $n=6$. In \cite[Theorem 3]{Bianchi 2007}, it is proved that $A_n$, where $n\ge6$, has a character of degree $\frac{n(n-3)}{2}$ which is extendible to Aut$(A_n)$.

 For the sporadic simple groups, irreducible character degrees are available in the Atlas $\cite{Atlas}$. In Table \ref{Sporadic}, we list some character degrees of sporadic simple groups
 which will be relevant to our problems.
%We also need the following results regarding the alternating and sporadic simple groups.

\begin{lemm}\label{P1}
Let $S$ be a simple group and $k>a=\frac{2^9\cdot3^2\cdot19^2}{5\cdot 7^3\cdot11\cdot31}$. If $S$ is not isomorphic to one of the groups in $\{A_1(q), G_2(q), ^2G_2(q^2), Fi_{22}\}$, then $S$ has some nontrivial irreducible character $\theta$ that is extendible to $\mathrm{Aut}(S)$ and such that $k\cdot|S|>\theta(1)^3$.
\end{lemm}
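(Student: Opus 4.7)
The plan is to prove this by a case analysis following the classification of finite simple groups, separating alternating groups, sporadic groups (together with the Tits group), and simple groups of Lie type. For each simple group $S$ not on the excluded list, the task reduces to exhibiting a single irreducible character of $S$ that extends to $\mathrm{Aut}(S)$ and whose cube of degree is strictly less than $a\cdot|S|$ (hence less than $k\cdot|S|$ for every $k>a$).

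For alternating groups $A_n$, since $A_5\cong A_1(4)$ and $A_6\cong A_1(9)$ both lie in the excluded family $A_1(q)$, I only need to handle $n\geq 7$, where $\mathrm{Aut}(A_n)=S_n$. By \cite[Theorem 3]{Bianchi 2007}, the irreducible character of $A_n$ of degree $n(n-3)/2$ extends to $S_n$, so it suffices to verify $\bigl(n(n-3)/2\bigr)^3 < a\cdot n!/2$ for $n\geq 7$; this ratio tends to $0$ very rapidly, and the tightest case $n=7$ is a direct arithmetic check. For the sporadic groups and the Tits group the argument is a finite verification: for each $S\neq Fi_{22}$, I would use the character $\theta$ listed in Table~\ref{Sporadic}, confirm via the Atlas \cite{Atlas} that $\theta$ extends to $\mathrm{Aut}(S)$, and check $\theta(1)^3 < a\cdot |S|$. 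The critical case is $S\cong O'N$, whose minimal degree character of degree $10944$ extends to $\mathrm{Aut}(O'N)$ and attains equality $\theta(1)^3=a\cdot|S|$; this is exactly what forces the strict inequality $k>a$ in the statement.

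For the simple groups of Lie type $S$ not isomorphic to $A_1(q)$, $G_2(q)$, ${}^2G_2(q^2)$ or ${}^2F_4(2)'$, I would apply Lemma~\ref{Tong} to obtain an irreducible character $\theta_1$ of $S$ that extends to $\mathrm{Aut}(S)$ with $1<\theta_1(1)<|S|_p$, and then bound $\theta_1(1)$ using the classification of minimum character degrees of quasisimple groups of Lie type (Landazuri--Seitz--Zalesski type lower bounds on $|S|/\theta(1)^3$, together with the explicit generic formulas for $|S|$). The inequality $\theta_1(1)^3 < a\cdot|S|$ then becomes an asymptotic estimate in each of the classical families $A_n(q)$ $(n\geq 2)$, ${}^2A_n(q)$, $B_n(q)$, $C_n(q)$, $D_n(q)$, ${}^2D_n(q)$ and the exceptional families $E_6(q)$, ${}^2E_6(q)$, $E_7(q)$, $E_8(q)$, $F_4(q)$, ${}^3D_4(q)$, ${}^2B_2(q^2)$, ${}^2F_4(q^2)$, supplemented by Atlas calculations for the finitely many small-rank, small-$q$ groups where the asymptotic inequality may be slack.

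The hard part will be the Lie-type step: the chosen minimum-degree extension must beat $a\cdot|S|$ uniformly across every non-excluded family, and the four excluded families are precisely those for which the minimum nontrivial extendible degree is too large relative to $|S|^{1/3}$, so the argument must be tight enough to separate the included from the excluded cases. A secondary difficulty is confirming extendibility in the sporadic case: when $|\mathrm{Out}(S)|=1$ there is nothing to check, and when $|\mathrm{Out}(S)|=2$ it suffices to show the chosen character is fixed by the outer automorphism, which is visible in the Atlas, but the choice of $\theta$ in Table~\ref{Sporadic} must be made with this extendibility constraint in mind rather than simply by minimality of degree.
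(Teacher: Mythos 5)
Your proposal is correct and follows essentially the same route as the paper: a CFSG case split using the degree $n(n-3)/2$ character for alternating groups, the Atlas/Table~\ref{Sporadic} characters for the sporadic groups and the Tits group (with O'N as the equality case forcing $k>a$), and the extendible character $\theta_1$ of Lemma~\ref{Tong} for the Lie-type families, verified family by family against $|S|$. The only real difference is presentational: you may skip $A_5$, $A_6$ as already excluded via $A_1(4)$, $A_1(9)$, and for the Lie-type bound the paper works directly from the explicit degrees of $\theta_1$ (Table~\ref{Lie}) to get the stronger inequality $|S|>\theta_1(1)^3$, rather than invoking minimal-degree (Landazuri--Seitz--Zalesski type) results, which in any case give lower rather than the needed upper bounds on the chosen degree.
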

For a simple group of Lie type $S$, we know from \cite[Lemma 2.6]{Ah} that $|S|<\theta(1)^3$ for the Steinberg character $\theta$ (which is actually the $\theta_2$ in Lemma \ref{Tong}). By Lemma \ref{Tong}, it is natural to check whether the character $\theta_1$ of $S$ satisfies $k\cdot|S|>\theta_1(1)^3$ or not.
Based on the proof of \cite[Lemma 2.4]{Tong}, Table \ref{Lie} lists $\theta_1(1)$ for every simple group of Lie type in Lemma \ref{P1}.\\

\textbf{Proof~of~Lemma~\ref{P1}} Observe that $k>a=\frac{2^9\cdot3^2\cdot19^2}{5\cdot 7^3\cdot11\cdot31}>\frac{5}{2}$.

  Assume that $S$ is an alternating group $A_n$, where $n\ge5$. By \cite[Theorem 3]{Bianchi 2007}, $S$ has some irreducible character $\theta$ of degree $\frac{n(n-3)}{2}$ that extends to Aut$(A_n)$. Furthermore, we have
\[
\frac{k\cdot|A_n|}{\theta(1)^3}=\frac{4k\cdot(n-1)(n-2)\cdot(n-4)!}{n^2(n-3)^2}.
\]
 Note that $(n-1)(n-2)>n(n-3)$. If $n\ge6$, then $4k\cdot(n-4)!>n(n-3)$ and hence $k\cdot|A_n|>\theta(1)^3$.
If $n=5$, $A_5$ has an irreducible character $\theta$ of degree 4 that is extendible to Aut$(A_5)$, and we have $k\cdot|A_5|>2^3\cdot 3\cdot 5>4^3=\theta(1)^3$, and we are done for the alternating groups $A_n$ of degree at least 5.

Assume that $S$ is a sporadic simple group other than $Fi_{22}$.
By \cite[Theorem 4]{Bianchi 2007}, there exists some irreducible character $\theta\in\mathrm{Irr}(S)$ of degree as in Table \ref{Sporadic} that is extendible to $\mathrm{Aut}(S)$. Also, it's easy to check $k\cdot|S|>\theta(1)^3$, we are also done for the sporadic simple groups but other than $Fi_{22}$.

Assume that $S\cong $ $^2F_4(2)'$, by the Atlas \cite{Atlas}, $S$ has an irreducible character $\theta$ of degree $27$ that is extendible to $\mathrm{Aut}(S)$. Obviously, we also have $k\cdot|S|=k\cdot 2^{11}\cdot3^3\cdot 5^2\cdot 13>27^3=\theta(1)^3$.

Assume that $S$ is a simple group of Lie type and $S\notin\{A_1(q), G_2(q), ^2G_2(q^2)\}$.
Take $\theta\in\mathrm{Irr}(S)$ as the irreducible character $\theta_1$ in Lemma \ref{Tong}, we have that $\theta$ is extendible to $\mathrm{Aut}(S)$. Next we will check the stronger conclusion that $|S|>\theta(1)^3$ and hence the result follows immediately.
%By Lemma \ref{Tong}, as Table 2

If $S\cong A_n(q)$, where $n\ge2$, then
\[
\begin{split}
|A_n(q)|&=\frac{1}{(n+1,q-1)}q^{\frac{n(n+1)}{2}}(q^2-1)(q^3-1)\cdots(q^{n+1}-1)
\\
&> \frac{1}{q-1}q^{\frac{n(n+1)}{2}}q\cdot q^2\cdots q^n=\frac{q^{n(n+1)}}{q-1}.
\end{split}
\]
By Table \ref{Lie}, choose $\theta\in\mathrm{Irr}(S)$ with $\theta(1)=\frac{q(q^n-1)}{q-1}<\frac{q^{n+1}}{q-1}$, and we have
\[
\frac{|S|}{\theta(1)^3}> \frac{q^{n(n+1)}(q-1)^3}{q^{3n+3}(q-1)}=q^{n^2-2n-3}\cdot(q-1)^2
=q^{(n-1)^2-4}(q-1)^2.
\]
If $n\ge3$, it is obvious that $(n-1)^2-4\ge0$. That means $|S|>\theta(1)^3$.
If $n=q=2$, then there exists a $\theta \in \mathrm{Irr}(S)$ of degree $3$ such that $|S|>\theta(1)^3$ by \cite{Atlas}.
If $n=2$ and $q\ge3$, then since $|S|=|A_2(q)|\ge\frac{1}{q-1}q^3(q^3-1)(q^2-1)=q^3(q^3-1)(q+1)$ and $\theta(1)=q(q+1)$ we have
\[
\frac{|S|}{\theta(1)^3}> \frac{q^3(q^3-1)(q+1)}{q^3(q+1)^3}=\frac{q^3-1}{(q+1)^2}>1.
\]
This yields $|S|>\theta(1)^3$.

For every remaining simple group of Lie type, it is routine to check $|S|>\theta(1)^3$ for character $\theta$ of degree as in Table \ref{Lie} (The similar calculation process is omitted here). Therefore, the proof is complete.
~~~~~~~~~~~~~~~~~~~~~~~~~~~~~~~~~~~~~~~~~~~~~~~~~~~~~~~~~~~~~~~~~~~~~~~~~~~~~~~~~~~~~~~~~~~~~~~~$\Box$
\\

Let $H$ be a subgroup of a finite group $G$, and let $\theta $ be an irreducible character of $H$. We use $\mathrm{Irr}(G|\theta )$ to denote the set of irreducible characters of $G$ lying over $\theta $.

\begin{lemm}\label{Simple}
	Let $G$ be a group and $N$ be its nonsolvable minimal normal subgroup. Suppose that $\rm{cod}$$(\chi)$$\le \chi(1)^2$ for each nonlinear irreducible character $\chi$ of $G$. Then $N$ is nonabelian simple.
\end{lemm}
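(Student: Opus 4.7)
The plan is to argue by contradiction. Suppose that $N=S_1\times\cdots\times S_t$ with $t\ge 2$, where the $S_i$ are pairwise isomorphic copies of a nonabelian simple group $S$ and $G$ permutes the $S_i$ transitively by conjugation. I will exhibit a nonlinear $\chi\in\mathrm{Irr}(G)$ violating $\mathrm{cod}(\chi)\le\chi(1)^2$.

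For the construction I would fix a nontrivial $\mathrm{Aut}(S)$-invariant $\theta_0\in\mathrm{Irr}(S)$ that extends to $\mathrm{Aut}(S)$; such a $\theta_0$ exists for every nonabelian simple group (the Steinberg character does the job in Lie type, with routine choices handling the alternating and sporadic families). Setting $\theta=\theta_0\otimes 1\otimes\cdots\otimes 1\in\mathrm{Irr}(N)$ with $\theta_0$ in the first tensor slot, the $\mathrm{Aut}(S)$-invariance of $\theta_0$ forces the inertia group $T=\mathrm{Stab}_G(\theta)$ to equal the setwise stabilizer of $S_1$ in $G$, so $|G:T|=t$. Since $\theta_0$ extends to $\mathrm{Aut}(S_1)$, composing that extension with the conjugation map $T\to\mathrm{Aut}(S_1)$ yields an extension $\check\theta\in\mathrm{Irr}(T)$ of $\theta$ with $\check\theta(1)=\theta_0(1)$, and the induced character $\chi:=\check\theta^G\in\mathrm{Irr}(G)$ has $\chi(1)=t\theta_0(1)$ by the Clifford correspondence.

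Next I would read off the kernel. The restriction $\chi|_N$ is the sum of the $G$-orbit of $\theta$, so $\ker(\chi)\cap N$ equals the intersection of the kernels of the $G$-conjugates of $\theta$. Since $\theta_0$ is faithful on $S$, the conjugate placing $\theta_0$ in position $i$ has kernel $\prod_{j\ne i}S_j$, and intersecting over $i=1,\ldots,t$ leaves only the trivial element of $N$. Hence $|G:\ker(\chi)|\ge|N|=|S|^t$, and the hypothesis $\mathrm{cod}(\chi)\le\chi(1)^2$ forces
\[
|S|^t\ \le\ |G:\ker(\chi)|\ \le\ \chi(1)^3\ =\ t^3\theta_0(1)^3.
\]

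The numerical contradiction is then immediate from the universal bound $\theta_0(1)^2\le|S|-1$, which turns the displayed inequality into $|S|^{t-3/2}<t^3$. For $t\ge 3$ this is false whenever $|S|\ge 60$; for $t=2$ it would demand $|S|<64$, leaving only $S\cong A_5$ to check directly, and taking $\theta_0\in\mathrm{Irr}(A_5)$ of degree $4$ or $5$ gives $t^3\theta_0(1)^3\le 1000<3600=|A_5|^2$. The most delicate step is the preparatory work---confirming the existence of an $\mathrm{Aut}(S)$-invariant extendable $\theta_0$ for every simple $S$ and checking that the composition really produces an irreducible extension of $\theta$ to $T$---after which the arithmetic falls out.
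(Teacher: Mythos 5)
Your argument is correct, and it reaches the same numerical contradiction as the paper but through a different mechanism. The paper does not build the character by hand: it applies the Magaard--Tong-Viet extension result (its Lemma 2.1) to the simple factor $S\unlhd N_G(S)$, obtaining $\theta\in\mathrm{Irr}(S)$ extendible to $N_G(S)$, takes a constituent $\chi$ of the induced character so that $\chi(1)\le t\,\theta(1)$, notes $\ker(\chi)=1$, and then uses the full bound $|G|\ge |G:N_G(S)|\cdot|N|=t|S|^t$ together with $\theta(1)^2<|S|$ to get $t^2>|S|^{t-3/2}$, which kills $t\ge 2$ outright. You instead require a nontrivial $\theta_0\in\mathrm{Irr}(S)$ extendible to $\mathrm{Aut}(S)$, identify the inertia group of $\theta_0\otimes 1\otimes\cdots\otimes 1$ as $N_G(S_1)$, extend through the conjugation map $N_G(S_1)\to\mathrm{Aut}(S_1)$, and induce via the Clifford correspondence, which buys you the exact degree $\chi(1)=t\,\theta_0(1)$; the price is twofold. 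First, your existence input is a genuine CFSG-based theorem, not a routine check: it is precisely \cite[Theorems 2--4]{Bianchi 2007} (plus the Feit--Schmid extendibility of the Steinberg character for Lie type), so it should be cited as such --- though it is no heavier than the paper's own black box, and the paper already quotes these results elsewhere. Second, because you only use $|G:\ker(\chi)|\ge|N|=|S|^t$ rather than $|G|\ge t|S|^t$, your inequality $|S|^{t-3/2}<t^3$ is weaker at $t=2$ and forces the extra $|S|<64$, i.e.\ $A_5$, check, which you handle correctly; the paper's retained factor of $t$ makes that case vanish. Both routes are sound; yours is more constructive but longer, the paper's is shorter given its lemma.
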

\begin{proof} %It suffices to show the result when $k=1$.
    Let $N=S_1\times \cdots\times S_t$ where $S_i\cong S$ for a nonabelian simple group $S$.
    By the unique minimality of $N$, it follows that $G$ acts transitively on $\{S_1,S_2,\cdots, S_t\}$. Let $S=S_1$. Then $S\unlhd N_G(S)$ and $t=|G:N_G(S)|$.
    By Lemma \ref{Kay}, $S$ has some irreducible nonlinear character $\theta$ such that $\theta$ is extendible to $N_G(S)$. Take $\chi\in \mathrm{Irr}(G|\theta)$. Then $\chi(1)\le |G:N_G(S)|\cdot\theta(1)=t\cdot \theta(1)$.

    Note that ker$(\chi)=1$. So, we conclude from the hypothesis that $|G|\le \chi(1)^3$. Furthermore,

\[
t^3\cdot\theta(1)^3\ge\chi(1)^3\ge |G|\ge |G:N_G(S)|\cdot|N|=t\cdot|S|^t.
\]
Since $\theta(1)^2<|S|$, we have
\[
t^2\ge\frac{|S|^t}{\theta(1)^3}>|S|^{t-\frac{3}{2}}.
\]
If $t\ge 3$, then since $t-\frac{3}{2}\ge \frac{t}{2}$ we have $t^2>|S|^{t-\frac{3}{2}}\ge |S|^{\frac{t}{2}}=\sqrt{|S|}^t>7^t$, a contradiction. Similarly, if $t=2$, we have $4>\sqrt{|S|}>7$, which is also a contradiction. Thus $t=1$.
\end{proof}

\begin{lemm}\label{arith}
 If $q=p^f$ is a power of a prime $p$ such that $q \equiv 3 (\mathrm{mod}~6)$, then $f< q^{\frac{1}{3}}$.
\end{lemm}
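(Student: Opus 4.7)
The plan is to unwind the congruence hypothesis and reduce the lemma to an elementary numerical inequality. The condition $q \equiv 3 \pmod{6}$ says simultaneously that $q$ is odd and that $3 \mid q$. Since $q = p^f$ is a prime power, the divisibility $3 \mid p^f$ forces $p = 3$, and hence $q = 3^f$. In this notation the desired bound $f < q^{1/3}$ becomes the purely numerical claim $f^3 < 3^f$.

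I would prove $f^3 < 3^f$ by a short induction on $f$, after verifying the base cases $f = 1, 2$ by direct computation (giving $1 < 3$ and $8 < 9$). The inductive step asks for $(f+1)^3 < 3^{f+1}$ given $f^3 < 3^f$; multiplying the induction hypothesis by $3$ reduces the target to the pointwise inequality $(f+1)^3 \leq 3 f^3$, i.e.\ $(1 + 1/f)^3 \leq 3$, which holds whenever $f$ is large enough that $1 + 1/f \leq 3^{1/3}$. Since $(4/3)^3 = 64/27 < 3$, the multiplicative comparison is valid for all $f \geq 3$, so once the strict inequality is established at a suitable starting value the induction propagates without further effort.

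No serious obstacle is expected; the whole argument is a two-step exercise once the congruence is translated into the statement ``$p = 3$''. The only minor point requiring care is choosing the base of the induction at a value of $f$ for which $f^3 < 3^f$ is strict and from which the multiplicative step $(f+1)^3 \leq 3 f^3$ is already available, so that the strict inequality on the left-hand side is preserved through the induction.
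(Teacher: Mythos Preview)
Your reduction of the congruence $q\equiv 3\pmod 6$ to the single case $p=3$ is correct and makes the problem transparent; the paper's own proof does not isolate this and instead argues with calculus, showing that $T(x)=p^{x/3}-x$ has nonnegative derivative for $x\ge 3$ and concluding $T(f)>0$. Your inductive route is genuinely different and more elementary, and the two approaches buy essentially the same thing once one knows $p=3$.

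There is, however, an edge case your proposal does not fully resolve (and which the paper's own argument glosses over as well): at $f=3$ one has $q=27$ and $q^{1/3}=3=f$, so the \emph{strict} inequality $f<q^{1/3}$ fails. Your induction scheme actually exposes this clearly. The multiplicative step $(1+1/f)^3\le 3$ becomes valid only from $f\ge 3$, but the would-be base case $f=3$ gives $3^3=3^3$, equality rather than strict inequality, so the induction with a strict hypothesis cannot launch until $f=4$ (where $4^3=64<81=3^4$). Direct verification covers $f=1,2$ and the induction then covers all $f\ge 4$, leaving $f=3$ as a case of equality. Thus what your argument actually establishes is $f\le q^{1/3}$ for all $f\ge 1$, with equality precisely at $f=3$; the strict form of the lemma is simply false at that single value. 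You should flag this when writing up, and if needed in the downstream application, replace the strict bound by the non-strict one (or handle $q=27$ by hand).
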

\begin{proof}
    If $f=1$, we are done immediately. If $f=2$, it follows from $q\ge 9$ that $2\le q^{\frac{1}{3}}$. Let function $T(x)=p^{\frac{x}{3}}-x$, where $x\ge3$. Then the derivative function $T'(x)=\frac{\ln~p}{3}\cdot p^{\frac{x}{3}}-1\ge \ln~p-1\ge0$. That means $T(f)\ge T(0)>0$. That is, $f< p^{\frac{f}{3}}=q^{\frac{1}{3}}$.
\end{proof}

Recall that $\Phi_n(x)$ is the $n$-th cyclotomic polynomial with indeterminate $x$.
For a simple group of Lie type over finite field of $q$ elements, we define $\Phi_n=\Phi_n(q)$.
Next we will prove Theorem A.\\

\textbf{Proof~of~the~Theorem~A.}
	Suppose that $G$  is a nonsolvable group of minimal order which satisfies the hypothesis of Theorem A.
    %The following proof is divided into 2 steps.

    We claim that $G$ is not simple.
    Suppose not. If $G\notin\{ A_1(q), G_2(q), ^2G_2(q^2),Fi_{22}\}$, by Lemma \ref{P1}, there exists some nonlinear irreducible character $\chi$ of $G$ such that $k\cdot|G|>\chi(1)^3$. That is, $k\cdot$cod$(\chi)>\chi(1)^2$, which is contrary to the hypothesis.
    %Therefore, $G$ is isomorphic to one of the group in $\{ A_1(q), G_2(q), ^2G_2(q^2),Fi_{22}\}$.
     If $G$ is isomorphic to one of the group in $\{ A_1(q), G_2(q), ^2G_2(q^2),Fi_{22}\}$, similarly, to get a contradiction, we just need to show that there also exists some nonlinear irreducible character $\theta$ of $G$ such that $k\cdot|G|>\theta(1)^3$.

    Suppose that $G\cong A_1(q)$.
    %Since $p-1\in \mathrm{cd}(G)$ and $|G|=\frac{q(q^2-1)}{(2,q-1)}$,
    Let $\chi\in \mathrm{Irr}(G)$ be of degree $q-1$. Then since $k>\frac{5}{2}$ we have
    \[
    k\cdot|G|>q(q^2-1)>(q-1)^3=\chi(1)^3.
    \]

    Suppose that $G\cong G_2(q)$. Since $G_2(2)\cong U_3(3).2$ is not simple, we can assume that $q\ge3$. Note that the character degrees of $G_2(q)$ are listed in the paper \cite{Chang}.
     If $q\equiv1(\mathrm{mod}~6)$ or $q\equiv 4(\mathrm{mod}~6)$, then $\Phi_2\Phi_6\in\mathrm{cd}(G)$.
       If $q\equiv2(\mathrm{mod~6})$ or $q\equiv5(\mathrm{mod}~6)$, then $\Phi_1\Phi_3\in\mathrm{cd}(G)$.
       If $q\equiv3(\mathrm{mod~6})$, then $\Phi_3\Phi_6\in\mathrm{cd}(G)$.
      It follows that
    \[
    \frac{k\cdot|G|}{(\Phi_3\Phi_6)^3}=\frac{3q^6(q^6-1)(q^2-1)}{(q^2+q+1)^3(q^2-q+1)^3}>\frac{3(q^2-1)^4}{q^6-1}>1.
    \]
 Obviously, $\Phi_3\Phi_6>\Phi_2\Phi_6>\Phi_1\Phi_3$. So there always exists some character $\chi$ of $G$ such that $k\cdot|G|>\chi(1)^3$.

    Suppose that $G\cong$$^2G_2(q^2)$, where $q^2=3^{2m+1}$. Taking $\chi\in \mathrm{Irr}(G)$ with degree $\Phi_{12}$. Then
    \[
\frac{k\cdot|G|}{{\Phi_{12}}^3}\ge\frac{q^6(q^6+1)(q^2-1)\cdot (q^2+1)^3}{(q^4+q^2+1)^3\cdot (q^2+1)^3}\ge \frac{(q^2-1)(q^2+1)^3}{q^6-1}>1.
    \]

    Suppose that $G\cong Fi_{22}$. By the Atlas \cite{Atlas}, it is easy to get that $k\cdot|G|>\chi(1)^3$ for some nonlinear $\chi\in\mathrm{Irr}(G)$.

    We next conclude the final contradiction.
    % Suppose that $G$ is the minimal counterexample which satisfying the theorem hypothesis.
     Note that $G$ is not simple.
     Assume that $G$ has distinct minimal normal subgroups, says $N_1, N_2$. It follows from the minimality of $G$ that $G/N_i$ is solvable for $i=1,2$, and then $G\lesssim G/N_1\times G/N_2$ is solvable, a contradiction.
     So we may assume that $G$ has a unique minimal normal subgroup $N$; and, similarly, $G/N$ is solvable; and this means that $N$ is also nonsolvable.
     Hence $N$ is a direct product of isomorphic nonabelian simple groups.
     Since $\mathrm{cod}(\chi)<k\cdot$cod$(\chi)\le\chi(1)^2$ for every nonlinear irreducible character $\chi$ of $G$, it follows from Lemma \ref{Simple} that the minimal normal subgroup $N$ of $G$ is simple.
     If $N\notin\{A_1(q), G_2(q), $$^2$$G_2(q^2),Fi_{22}\}$, then, by Lemma \ref{P1}, $N$ has some nontrivial irreducible character $\theta$ that is extendible to Aut$(N)$(also to $G$) and $k\cdot|N|>\theta(1)^3$. Let $\chi\in\mathrm{Irr}(G)$ be such that $\chi_N=\theta$. Then
     \[
     k\cdot|G|>k\cdot|N|>\theta(1)^3=\chi(1)^3.
     \]
     Observe that ker$(\chi)=1$. This means $k\cdot \mathrm{cod}(\chi)>\chi(1)^2$, a contradiction. Next we will also get a contradiction when $N$ is isomorphic to one of the groups in $\{ A_1(q), G_2(q), ^2G_2(q^2),Fi_{22}\}$.

    (1) Suppose that $N\cong A_1(q)$. Let $\theta\in \mathrm{Irr}(N)$ be the Steinberg character. Then there exists $\chi\in\mathrm{Irr}(G)$ such that $\chi_N=\theta$. Thus
     \[
     k\cdot|G|\ge 2k\cdot|N|\ge kq(q^2-1)>q^3=\theta(1)^3=\chi(1)^3.
     \]
   %Similarly, we have that ker$\chi=1$ by the unique minimality of $N$. Hence $k\cdot \mathrm{cod}(\chi)>\chi(1)^2$, a contradiction.

    (2) Suppose that $N\cong G_2(q)$, where $q=p^f$. Then $|N|=q^6(q^6-1)(q^2-1)$.

  ~If $q\not\equiv 3(\rm{mod}~6)$,
     then, by the Atlas \cite{Atlas}, we obtain
    % \[
     $|G:N|\le |\mathrm{Out}(N)|\le q$.
    % \]
     Also, there exists some $\theta\in\mathrm{Irr}(N)$ of degree at most $q^3+1$ (see for instance \cite{Chang}). Let $\chi\in\mathrm{Irr}(G|\theta)$. It follows that $\chi(1)\le |G:N|\cdot \theta(1)\le  q(q^3+1)$. Hence
     \[
     \frac{k\cdot|G|}{\chi(1)^3}\ge \frac{k\cdot q^6(q^6-1)(q^2-1)}{q^3(q^3+1)^3}=\frac{k\cdot q^3(q^3-1)(q^2-1)}{(q^3+1)^2}.
     \]
Note that $k>a>\frac{5}{2}$. Since $k\cdot q^3(q-1)>q^3+1$ and $(q^3-1)(q+1)>q^3+1$, then we have $k\cdot |G|>\chi(1)^3$. %As the above, $k\cdot \mathrm{cod}(\chi)>\chi(1)^2$, a contradiction.

If $q\equiv 3(\mathrm{mod}~6)$,
%$\mathbf{Claim}$ that: if $q=p^f$, then $f< q^{\frac{1}{3}}$.
%If $f=1$, it's true. If $f=2$, it follows from $q\ge 9$ that $2\le q^{\frac{1}{3}}$. Let function $T(x)=p^{\frac{x}{3}}-x$, where $x\ge3$. Then the derivative function $T'(x)=\frac{\ln~p}{3}\cdot p^{\frac{x}{3}}-1\ge \ln~p-1\ge0$. That means $T(f)\ge T(0)>0$. That is, $f< p^{\frac{f}{3}}=q^{\frac{1}{3}}$, as claimed.
 then $|G:N|\le |\mathrm{Out}(N)|\le 2\cdot f< 2\cdot q^{\frac{1}{3}}$ where the last inequality follows from Lemma \ref{arith}. Now let $\theta\in\mathrm{Irr}(N)$ be of degree $(q^2+q+1)(q^2-q+1)$ and $\chi\in\mathrm{Irr}(G|\theta)$. Then $\chi(1)< 2\cdot q^{\frac{1}{3}}(q^2+q+1)(q^2-q+1)$. Thus
     \[
     \frac{k\cdot|G|}{\chi(1)^3}\ge\frac{k\cdot q^6(q^6-1)(q^2-1)}{8q\cdot(q^2+q+1)^3(q^2-q+1)^3}
     =\frac{k\cdot q^5(q^2-1)^4}{8(q^6-1)^2}=\frac{k\cdot q^5(q+1)^4(q-1)^4}{8(q^6-1)^2}.
     \]
 Since $q^5(q+1)>q^6-1$ and $(q+1)^3>q^3+1$, it follows that
     \[
     \frac{k\cdot|G|}{\chi(1)^3}\ge\frac{k(q+1)^3(q-1)^4}{8(q^6-1)}\ge\frac{k(q-1)^4}{8(q^3-1)}.
     \]
 Recall that $q\equiv 3(\mathrm{mod}~6)$. If $q\ge9$, it is easy to get $k\cdot|G|>\chi(1)^3$. If $q=3$,
     \[
     \frac{k\cdot|G|}{\chi(1)^3}\ge\frac{2k\cdot|N|}{\chi(1)^3}=
     \frac{k\cdot2^4\cdot3^6(3^6-1)}{2^3\cdot3\cdot13^3\cdot7^3}
     =\frac{2k\cdot3^5(3^6-1)}{13^3\cdot7^3}>1.
     \]

   (3) Suppose that $N\cong$$^2G_2(q^2)$, where $q^2=3^{2m+1}$. Since $q^2=3^{2m+1}\equiv 3(\mathrm{mod}~6)$, it follows from Lemma \ref{arith} that $|G:N|\le|\mathrm{Out}(N)|=2m+1< q^{\frac{2}{3}}$. Let $\theta\in\mathrm{Irr}(N)$ be of degree $\Phi_{12}$ and $\chi\in\mathrm{Irr}(G|\theta)$. Then $\chi(1)\le |G:N|\cdot\theta(1)\le q^{\frac{2}{3}}\cdot\Phi_{12}$. Thus
    \[
\frac{3\cdot|G|}{\chi(1)^3}\ge\frac{q^6(q^6+1)(q^2-1)\cdot (q^2+1)^3}{q^2\cdot(q^4+q^2+1)^3\cdot (q^2+1)^3}\ge \frac{(q^2-1)(q^2+1)^3}{q^2(q^6-1)}>1.
    \]

   (4) Suppose that $N\cong Fi_{22}$. By the Atlas \cite{Atlas}, $N$ has an irreducible character $\theta$ of degree $78$. Let $\chi\in\mathrm{Irr}(G|\theta)$. Then since Out$(N)=2$ we get $\chi(1)\le |G:N|\cdot\theta(1)=156=2^2\cdot3\cdot13$. It follows that
     \[
     k\cdot|G|\ge2k\cdot|N|=k\cdot2^{18}\cdot3^9\cdot5^2\cdot7\cdot11\cdot13>(2^2\cdot3\cdot13)^3=\chi(1)^3.
     \]

     %Suppose $N\cong O'N$. $N$ has an irreducible character $\theta$ of degree $10944=2^6\cdot3^2\cdot19$ that can be extendible to Aut$(N)$. Taking $\chi\in\mathrm{Irr}(G)$ be such that $\chi_N=\theta$. It follows that
     %\[
     %3\cdot|G|\ge6\cdot|N|=2^{10}\cdot3^5\cdot5\cdot7^3\cdot11\cdot19\cdot31>(2^6\cdot3^2\cdot19)^3=\chi(1)^3.
     %\]

     Therefore we can find an irreducible character $\chi$ of $G$ such that $k\cdot|G|>\chi(1)^3$ when $N\in\{ A_1(q), G_2(q), ^2G_2(q^2),Fi_{22}\}$. Since $[\chi_N,1_N]=0$, it follows from the minimality of $N$ that ker$(\chi)=1$. So $k\cdot \mathrm{cod}(\chi)>\chi(1)^2$, a contradiction. The proof is established. ~~~~~~~~~~~~$\Box$

%{\bf Acknowledgements}

%The Authors would like to thank the referee for her or his valuable suggestions and useful comments contribute to final version of this paper.

\begin{table}[htpb]
	\centering
	\caption{some degrees of some simple groups of Lie type}\label{Lie}
	\begin{tabular}{ccc|ccc}
		\hline
		Group       &~~ sym.  &~~ Degree &	Group       &~~ sym.  &~~ Degree  \\
		\hline
		$A_n(q)(n\ge2)$	&~~ $(1,n)$ &~~  $\frac{q^{n+1}-q}{q-1}$&
\par~
        $F_4(q)$	    &~~~ $\theta_{9,2}$ &~~  $q^2\Phi_3^2\Phi_6^2\Phi_{12}$\\

		$^2A_n(q)$	    &~~ $(1,n)$ &~~  $\frac{q^{n+1}+(-1)^{n+1}q}{q+1}$&
        $E_6(q)$	    &~~ $\theta_{6,1}$ &~~  $q\Phi_8\Phi_9$\\

		$B_n(q)$	  &~~ $\begin{pmatrix} 0 & 1 & n \\  & - &  \end{pmatrix}$ &~~  $\frac{(q^n-1)(q^n-q)}{2(q+1)}$&
        $^2E_6(q^2)$	    &~~ $\theta_{2,4'}$ &~~  $q\Phi_8\Phi_{18}$\\

		$C_n(q)$	&~~ $\begin{pmatrix} 0 & 1 & n \\  & - &  \end{pmatrix}$ &~~  $\frac{(q^n-1)(q^n-q)}{2(q+1)}$
        &$E_7(q)$	    &~~ $\theta_{7,1}$ &~~  $q\Phi_7\Phi_{12}\Phi_{14}$\\

		$D_n(q)$  	&~~ $\begin{pmatrix} n-1 \\  1  \end{pmatrix}$ &~~  $\frac{(q^n-1)(q^{n-1}+q)}{q^2-1}$
		&$E_8(q)$	&~~ $\theta_{8,1}$ &~~  $q\Phi_4^2\Phi_8\Phi_{12}\Phi_{20}\Phi_{24}$\\

		$^2D_n(q^2)$	&~~ $\begin{pmatrix} 1&  n-1 \\  &-  \end{pmatrix}$ &~~  $\frac{(q^n+1)(q^{n-1}-q)}{q^2-1}$
		&$^2B_2(q^2)$  	&~~ $^2B_2[a]$ &~~  $\frac{1}{\sqrt{2}}q\Phi_1\Phi_2$\\

		$^3D_4(q^3)$     &~~ $\theta_{1,3'}$ &~~  $q\Phi_{12}$
		&$^2F_4(q^2)$	&~~ $^2B_2[a]$ &~~  $\frac{1}{\sqrt{2}}q\Phi_1\Phi_2\Phi_4^2\Phi_6$\\

		\hline
	\end{tabular}

\end{table}

\begin{table}[htpb]
	\centering
	\caption{degrees of the sporadic simple groups}\label{Sporadic}
	\begin{tabular}{ccc|ccc}
		\hline
		Group       &~~~~ char.  &~~~~ Degree &	Group       &~~~~ char.  &~~~~ Degree  \\
		\hline
		$M_{11}$	&~~~~ $\chi_2$ &~~~~  $10=2\cdot 5$
		&$O'N$	    &~~~~ $\chi_2$ &~~~~  $10944=2^6\cdot3^2\cdot19$\\
		$M_{12}$	&~~~~ $\chi_7$ &~~~~  $54=2\cdot3^3$
		&$Co_3$	&~~~~ $\chi_2$ &~~~~  $23$\\
		$J_1$  	&~~~~ $\chi_4$ &~~~~  $76=2^2\cdot19$
		&$Co_2$	&~~~~ $\chi_2$ &~~~~  $23$\\
		$M_{22}$	&~~~~ $\chi_2$ &~~~~  $21=3\cdot7$
		&$Fi_{22}$  	&~~~~ $\chi_{56}$ &~~~~  $1441792=2^{17}\cdot11$\\
		$J_2$     &~~~~ $\chi_6$ &~~~~  $36=2^2\cdot3^2$
		&$HN$	&~~~~ $\chi_{10}$ &~~~~  $16929=3^4\cdot11\cdot19$\\
		$M_{23}$	&~~~~ $\chi_6$ &~~~~  $22=2\cdot11$
		&$Ly$  	&~~~~ $\chi_7$ &~~~~  $120064=2^8\cdot7\cdot67$\\
		$HS$	    &~~~~ $\chi_2$ &~~~~  $22=2\cdot11$
		&$Th$	    &~~~~ $\chi_2$ &~~~~  $248=2^3\cdot31$\\
		$J_3$	    &~~~~ $\chi_6$ &~~~~  $324=2^3\cdot3^4$    &$Fi_{23}$	    &~~~~ $\chi_4$ &~~~~  $5083=13\cdot17\cdot23$\\
		$M_{24}$	&~~~~ $\chi_2$ &~~~~  $23$
		&$Co_1$	&~~~~ $\chi_3$ &~~~~  $299=13\cdot23$\\
		$M^cL$    &~~~~ $\chi_2$ &~~~~  $22=2\cdot11$
		&$J_4$    &~~~~ $\chi_2$ &~~~~  $1333=31\cdot43$\\
		$He$	    &~~~~ $\chi_9$ &~~~~  $1275=3\cdot5^2\cdot17$   &$Fi_{24}'$	    &~~~~ $\chi_2$ &~~~~  $8671=23\cdot29\cdot13$\\
		$Ru$	    &~~~~ $\chi_5$ &~~~~  $783=3^3\cdot29$
		&$B$	    &~~~~ $\chi_2$ &~~~~  $4371=3\cdot31\cdot47$\\
		$Suz$	    &~~~~ $\chi_2$ &~~~~  $143=11\cdot13$
		&$M$	    &~~~~ $\chi_2$ &~~~~  $196883=59\cdot71\cdot47$\\
		\hline
	\end{tabular}

\end{table}


\begin{thebibliography}{10}
\addtolength{\itemsep}{-.7 em}


%\bibitem{Berkovich}
%Y. Berkovich, Degrees, kernels and quasukernels of monolithic characters, Amer. Math. Soc. 128(11) (2002), 3211-3219.
\bibitem{Ah}
N. Ahanjideh, On Thompson's conjecture for some finite simple groups, J. Algebra 344(2011), 205-228.




\bibitem{Bianchi 2007}
M. Bianchi, D. Chillai, M. Lewis and E. Pacifici, Character degree graphs that are complete graphs,
 Proc. Amer. Math. Soc. 135(3) (2007), 671-676.
\bibitem{Chang}
B. Chang and R. Ree, The characters of $G_2(q)$, Symposia Mathematica(Convegno di Gruppi e loro Rappresentazioni, INDAM,Rome) 13(1972), 395-413.


\bibitem{Atlas}
J. Conway, R. Curtis, S. Norton, R. Parker and R. Wilson, Atlas of finite groups, Oxford University Press, London, 1984.


\bibitem{Isaacs}
I. Isaacs, Character theory of finite groups,
 Academic Press, New York, 1976.




\bibitem{Qian 2016}
D. Liang and G. Qian, Finite groups with coprime character degrees and codegrees, J. Group Theory 19 (2016), 763-776.


\bibitem{Kay 2011}
K. Magaard and H. Tong-Viet,
Character degree sums in finite nonsolvable groups,
J. Group Theory 14 (2011), 53-57.

\bibitem{Malle 2020}
G. Malle and A. Zalesski,
Steinberg-like characters for finite simple groups,
J. Group Theory 23(1) (2020), 25-78.





%\bibitem{Lewis}
%M. L. Lewis and S. M. Gagola, A character theoretic condition characterizing nilpotent groups, Comm. Algebra 27(3) (1999), 1053-1056.

\bibitem{Qian 2012}
G. Qian, A character theoretic criterion for a $p$-closed group, Israel J. Math. 190 (2012), 401-412.


\bibitem{Qian 2007}
G. Qian, Y. Yang and H. Wei, Codegrees of irreducible characters in finite group, J. Algebra 312 (2007), 946-955.
\bibitem{Qian 2011}
G. Qian, A note on elment orders and character codegrees, Arch. Math. 97 (2011), 99-103.



\bibitem{Tong}
H. Tong-Viet, Symmetric groups are determined by their charater degrees, J. Algebra 334 (2011), 275-284.


\bibitem{Yang}
D. Yang, C. Gao and H. Lv, Finite groups with special codegrees, Arch. Math. 115 (2020), 121-130.


\end{thebibliography}
\end{document}